\newtheorem{thm}{Theorem}[section]
\newtheorem{prop}[thm]{Proposition}
\newtheorem{lem}[thm]{Lemma}
\newtheorem{defn}[thm]{Definition}
\newtheorem{rem}[thm]{Remark}}
\newtheorem{exam}{Example}[section]}
\newcommand{\ra}{\rightarrow}
\newcommand{\dis}{\displaystyle}
\def\R{\mathbb R}
\def\d{\text{\rm{d}}}
\def\E{\mathbb E}
\def\p{\mathbb P}\def\e{\text{\rm{e}}}
\def\La{\Lambda}
\def\veps{\varepsilon}
\def\diag{\mathrm{diag}}
\def\S{\mathcal S}
\newcommand{\fin}{\hspace*{\fill}\rule{0.3em}{1ex}}
\newenvironment{proof}{{\bf \noindent Proof.}}{\fin}
\numberwithin{equation}{section}
\begin{document}

\title{Algebraic stability of non-homogeneous regime-switching diffusion processes
\footnote{Supported in part by NSFC (No.11301030), NNSFC (No.11431014), 985-project.}
}

\author{Jing Li \qquad Jinghai Shao \footnote{ Email: shaojh@bnu.edu.cn}\\[0.5cm]
 School of Mathematical Sciences, Beijing Normal University, Beijing 100875, China}
\maketitle

\begin{abstract}
  Some sufficient conditions on the algebraic stability of non-homogeneous  regime-switching diffusion processes are established.
  In this work we focus on determining the decay rate of a stochastic system which switches randomly between different states, and owns different decay rates at various states. In particular,  we show that if a finite-state regime-switching diffusion process is $p$-th moment exponentially stable in some states and is $p$-th moment algebraically stable in other states, which are characterized by a common Lyapunov function, then this process is ultimately exponentially stable regardless of the jumping rate of the random switching between these states. Moreover, these results can be applied to the stabilization of SDE by feedback control to reduce the observation times.
\end{abstract}

\noindent AMS subject Classification (2010):\ 60J27, 93E15, 60A10

\noindent \textbf{Keywords}: Algebraic stability; Polynomial stability; Regime-switching.

\section{Introduction}
The purpose of this work is to study the algebraic stability of regime-switching diffusion processes. The stability of a stochastic process is an important property, and has many important applications in practice. There are extensive researches on the topic of exponential stability of stochastic processes, for example, \cite{AOP}, \cite{Kh}, \cite{Mao} among others. However, there are a large amount of stochastic processes which are stable but subject to a lower decay rate other than exponential decay. Consequently, it appears to be necessary to extend the usual results on exponential stability to algebraic stability, for instance, polynomial or logarithmic stability. To this aim, K. Liu and A.Y. Chen in \cite{LC} have provided some sufficient conditions for the solutions of stochastic differential equations with and without delay.   On the other hand, a wide variety of natural and engineering systems are inherently in a random environment in the sense that several dynamical models are required to describe their behavior when the environment is at different state. For example, concerning ecological population modeling, when two or more species live in proximity and share the same basic requirements, they usually compete for resources, food or territory. But the growth rates and the carrying capacities are often subject to environment noise. For instance, the growth rates of some species in the rainy season are much different from those in the dry season. In addition, the environment usually changes randomly. Owing to the advantage of providing more realistic models in applications, regime-switching diffusion processes, also known as hybrid diffusion processes, have received a fair amount of attention recently. See, for instance, \cite{BGM, Gh, MY, MYY, SX14, Sh15a, Sh15b, WSK, YX10, YZ} and references therein. However, due to the interaction of continuous dynamics and discrete events, the analysis of switching systems is more difficult and many new problems have been raised.

Let us consider the following simple regime-switching process $(X_t)_{t\geq 0}\in\R^d$ where $X_t$ solves $\d X_t= A_{\La_t}X_t\d t$ with $\La_t$ a Markov chain on a finite state space $\S=\{1,\ldots,N\}$ and $\{A_i\}_{i\in\S}$ a set of $d\times d$ real matrices. In \cite{BBMZ}, M. Benaim et al. showed that in the case $\S=\{1,2\}$, when $A_1$ and $A_2$ are both Hurwitz (all eigenvalues have strictly negative real part), $|X_t|$ may converge to $0$ or $\infty$ as $t\ra \infty$ depending on the switching rate as long as an average matrix $\bar A=\lambda A_1+(1-\lambda)A_2$ has a positive eigenvalue for some $\lambda\in (0,1)$. Furthermore, S. Lawley et al. \cite{Law} proved that the assumption that the average matrix has a positive eigenvalue is not necessary to ensure a blowup. These results show that the dynamics of the regime-switching processes can be very different from the individual dynamics in each fixed state and the averaged dynamics.

Precisely,  the regime-switching diffusion process considered in this work owns two components $(X_t, \La_t)$, where $(X_t)$ satisfies the following stochastic differential equation (SDE)
\begin{equation}\label{1.1}
\d X_t=b(t,X_t,\La_t )\d t+\sigma(t,X_t,\La_t)\d B_t,
\end{equation}
and $(\La_t)$ is a continuous time Markov chain on a finite state space $\S=\{1,2,\ldots,N\}$ with $Q$-matrix $(q_{ij})$. $(B_t)$ is a Brownian motion in $\R^d$ independent of $(\La_t)$. Throughout this work, we assume that $(q_{ij})$ is irreducible and conservative, which means, in particular, $q_k=\sum_{j\neq k} q_{kj}$ for each $k\in \S$. Here, the Markov chain $(\La_t)$ is used to describe the random switching of the environment between different states. It can be seen from equation \eqref{1.1} that the behavior of $(X_t)$
may be very different when the environment varies in different states, i.e. $\La_t$ takes different values. The randomness of $(\La_t)$ can cause essential difference to this system. For instance, there are examples (cf. \cite{PS}) to show that even when $(X_t)$ is ergodic in every fixed environment, $(X_t)$ itself could be transient when the environment changes randomly between these states. We refer the readers to the recent works \cite{CH, Sh15a, Sh15b} and references therein on the recurrent properties of regime-switching diffusion processes, where some criteria are provided to justify the recurrent property of regime-switching diffusion processes. This work is a continuation of our previous work \cite{SX14} on the stability of regime-switching diffusion processes. In \cite{SX14},  we focused on the time homogeneous regime-switching diffusion processes and the exponential stability in probability.  This work is devoted to studying the algebraic stability of non-homogeneous regime-switching diffusion processes.

Let us illustrate our motivation of this work via the following simple examples. Define the processes $(X_t)$, $(Y_t)$ and $(Z_t)$ by the following SDEs:
\begin{equation}\label{e-1}
\d X_t=-X_t\d t+X_t\d B_t, \quad X_0=x\in \R.
\end{equation}
\begin{equation}\label{e-2}
\d Y_t=-\frac{Y_t}{1+t} \d t+\frac{Y_t}{\sqrt{1+t}}\d B_t,\quad Y_0=y\in \R.
\end{equation}
\begin{equation}\label{e-3}
\d Z_t=-\frac{m Z_t}{1+t}\d t+\frac{\sqrt{m} Z_t}{\sqrt{1+t}}\d B_t,  \ \   Z_0=z\in\R,\ m>1.
\end{equation}
Then  we have
\begin{align*}&\limsup_{t\ra \infty} \frac{\log \E[X_t^2]}{t}\leq -1, \quad \limsup_{t\ra \infty}\frac{ \log \E [ Y_t^2]}{ \log( 1+t)}\leq -1, \ \
\limsup_{t\ra \infty}\frac{\log \E[ Z_t^2]}{ \log (1+t)^m}\leq -1.
\end{align*}
This means that $(X_t)$ is mean square stable with exponential decay; $(Y_t)$ is mean square stable  with polynomial decay of order 1; $(Z_t)$ is mean square stable with polynomial decay of order $m$.
\begin{exam}\label{ex-1}
Consider the following regime-switching diffusion process:
 \begin{equation}\label{e-4}
 \d X_t=\Big(-X_t\mathbf{1}_{\La_t=1}-\frac{X_t}{1+t}\mathbf{1}_{\La_t=2}\Big)\d t+\Big(X_t\mathbf{1}_{\La_t=1}+\frac{X_t}{\sqrt{1+t}}\mathbf{1}_{\La_t=2}\Big)\d B_t.
 \end{equation}
The process $(X_t)$ is mean square stable with exponential decay when the environment $\La_t$ is at the state ``1", and is mean square stable with polynomial decay of order 1  when $\La_t$ is at the state ``2".  The strong solution of this system exists uniquely (see, e.g. \cite{MY}, or \cite{Sh15c} for regime-switching diffusions with more general coefficients and state-dependent switching).  Some natural and important questions are whether the process $(X_t)$ is still mean square stable, and if so, what is the decay rate of the process $(X_t)$  when the environment $(\La_t)$ changes randomly between states ``1" and ``2" according to the $Q$-matrix $(q_{ij})$.
\end{exam}

\begin{exam}\label{ex-2}
Consider the following regime-switching diffusion process:
\begin{equation}\label{e-5}
\d X_t=-\Big(\frac{X_t}{1+t}\mathbf{1}_{\La_t=1}+\frac{2 X_t}{1+t}\mathbf{1}_{\La_t=2}\Big)\d t+\Big(\frac{X_t}{\sqrt{1+t}}\mathbf{1}_{\La_t=1}+\frac{\sqrt{2} X_t}{\sqrt{1+t}}\mathbf{1}_{\La_t=2}\Big)\d B_t.
\end{equation}
The process $(X_t)$ is mean square stable with polynomial decay when $(\La_t)$ takes the fixed value ``1" or ``2", but of different order.  What is the decay rate of $(X_t)$  when $(\La_t)$ changes randomly between states ``1" and ``2"?
\end{exam}

Although the process defined by \eqref{e-5} is very simple, to answer the question posed above is rather nontrivial. In fact,  direct calculation yields
\[\d \log X_t^2=-\frac{3\La_t}{1+t}\d t+\frac{2\sqrt{\La_t}}{\sqrt{1+t}}\d B_t.\]
Hence,
\begin{equation*}
  X_t^2=x_0^2\exp\Big(-\int_0^t\frac{3\La_s}{1+s}\d s+\int_0^t\frac{2\sqrt{\La_s}}{\sqrt{1+s}}\d B_s\Big),
\end{equation*}
and
\begin{equation}\label{1.7}
  \E X_t^2=x_0^2\,\E e^{-\int_0^t\frac{\La_s}{1+s}\d s}.
\end{equation}
Although the right hand side of \eqref{1.7} is simple, it is difficult to calculate or estimate its precise decay rate. Formula \eqref{1.7} reveals that the decay rate of a regime-switching diffusion process depends on the jumping times of the Markov chain $(\La_t)$ rather than simply on the sojourn time of $(\La_t)$ at each fixed state.  One can not expect to use the joint distribution of the jumping times of the Markov chain $(\La_t)$ to derive some useful estimation.

According to Theorem \ref{cor-1} below, the regime-switching diffusion process $(X_t)$ defined by \eqref{e-4} is mean square stable with exponential decay regardless of the  jumping rate $(q_{ij})$ of $(\La_t)$ between states ``1" and ``2". This phenomenon of independence of jumping rate $(q_{ij})$ is interesting from the theoretic viewpoint, and of importance in application to stabilize a switched system. This kind of phenomenon also appeared when we discussed the strong ergodicity of regime-switching diffusion processes in \cite{SX13}. An intuitive reason of this fact is that the exponential decay is fast enough that it can not be decelerated to polynomial decay by a process with  polynomial decay.
On the other hand, according to Theorem \ref{prop-2} below, the process $(X_t)$ satisfying \eqref{e-5} is  mean square stable with polynomial decay of order $\theta$, locating in the interval $(1,2)$ and depending on the jumping rates of $(\La_t)$.  This reflects the complexity of the stability  of regime-switching systems.

In Section 2 we first establish some sufficient conditions for the stability of regime-switching diffusion processes in terms of Lyapunov functions. We also provide the corresponding extension of results in \cite{SX14} for non-homogeneous regime-switching diffusion processes. Analogous to \cite{SX14}, these results can be generalized to study the stability of state-dependent regime-switching diffusion processes in an infinite countable state space. We do not present the results in this direction as it is not our main purpose of this work. Main results of this work are Theorems \ref{prop-2} and \ref{cor-1}. These results reflect the essential characteristics of the regime-switching diffusion processes. At last, we apply our results to the stabilization of a SDE  by a feedback control which can reduce greatly the observation times.

\section{Stability of regime-switching diffusion processes}
In this section we study the algebraic stability of regime-switching diffusion processes. Assume that SDE \eqref{1.1} admits a unique global solution throughout this work, and refer the reader to \cite{MY} or \cite{Sh15c} for concrete conditions on coefficients to ensure this assumption holds. Moreover, we refer to \cite[Chapter 5]{MY}, where X. Mao and C. Yuan have discussed many kinds of stability for regime-switching diffusion processes, including the asymptotical stability in $p$-th moment (Theorem 5.29), $p$-th moment exponential stability (Theorem 5.8), etc.

\begin{defn}\label{t-2-1}
Assume that $\lambda(t)>0$ and $\lambda(t)\uparrow \infty$ as $t\ra \infty$.
A stochastic process $(X_t)$ is said to be  $p$-th moment stable ($p>0$) with decay $\lambda(t)$ of order $\gamma>0$ if there exists a pair of positive constants $\gamma$ and $C(x_0)$ such that
$\dis \E|X_t|^p\leq C(x_0)\lambda(t)^{-\gamma}$, $t\geq 0$,
or equivalently,
\[\limsup_{t\ra \infty}\frac{\log \E|X_t|^p}{\log \lambda(t)}\leq -\gamma.\]
In particular, if $\lambda(t)=e^t$, then $(X_t)$ is called $p$-th moment stable with exponential decay. If $\lim_{t\ra \infty}\frac{\lambda(t)}{e^t}=0$, then $(X_t)$ is called $p$-th moment stable with algebraic decay.
\end{defn}

Let $\mathscr A_t $ be the infinitesimal generator of $(X_t,\La_t)$ satisfying SDE \eqref{1.1}, i.e.
\begin{align*}\mathscr A_t f(t,x,i)&=\frac{\partial f}{\partial t}(t,x,i) + \sum_{k=1}^d b_k(t,x,i)\frac{\partial f}{\partial x_k}(t,x,i) \\ &\quad+ \frac 12\sum_{k,l=1}^da_{kl}(t,x, i)\frac{\partial^2 f}{\partial x_k\partial x_l}(t,x,i)\!+\!\sum_{j\in \S}q_{ij}f(t,x,j)
      \end{align*}
      for any $f\in C^{1,2}(\R^+\!\times\R^d\!\times\S)$, where $(a_{kl})=\sigma\sigma^\ast$.
\begin{lem}\label{t-sw}
Let $(X_t,\La_t)$ be a regime-switching diffusion process satisfying SDE \eqref{1.1} and $(\La_t)$ be a Markov chain on $\S=\{1,\ldots,N\}$ with conservative and irreducible $Q$-matrix $(q_{ij})$. Assume that there exists a nonnegative function $V$ on $\R^+\!\times \!\R^d\times\!\S$ such that $x\mapsto V(t,x,i)$ is twice differentiable, $t\mapsto V(t,x,i)$ is differentiable, and
\begin{gather}
    \label{3.1}V(t,x,i)\geq |x|^p\quad \forall\, (t,x,i)\in \R^+\times \R^d\times \S;\\
    \label{3.2} \mathscr A_tV(t,x,i)\leq -\psi(t)V(t,x,i)\quad \forall\, (t,x,i)\in \R^+\!\times\R^d\!\times\S,
\end{gather} for some  function $ \psi\in C(\R^+;\R^+)$.
Set $\alpha(t)=\exp\big(\int_0^t\psi(s)\d s\big)$ for $t\geq 0$.
Suppose that $\lim_{t\ra \infty}\alpha(t)= \infty$, then $(X_t)$ is $p$-th moment stable with decay rate $\alpha(t)$ of order 1, i.e.
\begin{equation}\label{2.8}
\limsup_{t\ra\infty}\frac{\log \E |X_t|^p}{\log \alpha(t)}\leq -1.
\end{equation}
Moreover, it also holds that $(X_t)$ is almost surely asymptotically stable, i.e.
\[\lim_{t\ra \infty} X_t =0,\quad \mathrm{a.s.}\]
\end{lem}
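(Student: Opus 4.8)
The plan is to apply the generalized It\^o (Dynkin) formula to the auxiliary process $\alpha(t)V(t,X_t,\La_t)$ and to exploit condition \eqref{3.2} to show that it is a nonnegative supermartingale. Since $\alpha(t)=\exp\big(\int_0^t\psi(s)\d s\big)$ we have $\alpha'(t)=\psi(t)\alpha(t)$. Writing the generalized It\^o formula for the regime-switching diffusion as
\[
V(t,X_t,\La_t)=V(0,X_0,\La_0)+\int_0^t\mathscr A_sV(s,X_s,\La_s)\d s+N_t,
\]
where $N_t$ collects the Brownian stochastic integral together with the compensated martingale associated with the jumps of $(\La_t)$ (hence $N_t$ is a local martingale), the product rule against the $C^1$ deterministic function $\alpha$ gives
\[
\alpha(t)V(t,X_t,\La_t)=V(0,X_0,\La_0)+\int_0^t\alpha(s)\big[\psi(s)V+\mathscr A_sV\big](s,X_s,\La_s)\d s+\int_0^t\alpha(s)\,\d N_s.
\]
By \eqref{3.2} the integrand $\psi(s)V+\mathscr A_sV$ is nonpositive, so $\alpha(t)V(t,X_t,\La_t)$ is the sum of $V(0,X_0,\La_0)$, a nonincreasing process and a local martingale; being nonnegative, it is a genuine supermartingale.

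For the moment estimate I would localize by the stopping times $\tau_n=\inf\{t\geq0:|X_t|\geq n\}$, which increase to $\infty$ by the non-explosion of the solution. Stopping the local martingale at $\tau_n$ turns it into a true martingale on $[0,t]$, so taking expectations gives $\E\big[\alpha(t\wedge\tau_n)V(t\wedge\tau_n,X_{t\wedge\tau_n},\La_{t\wedge\tau_n})\big]\leq V(0,X_0,\La_0)$. Letting $n\ra\infty$ and invoking Fatou's lemma (using $V\geq0$ and path continuity) yields $\E[\alpha(t)V(t,X_t,\La_t)]\leq V(0,X_0,\La_0)$. Combining this with \eqref{3.1} gives
\[
\E|X_t|^p\leq \E\,V(t,X_t,\La_t)\leq \frac{V(0,X_0,\La_0)}{\alpha(t)},
\]
and taking logarithms, then dividing by $\log\alpha(t)\ra\infty$, produces \eqref{2.8}.

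The almost sure statement follows from the convergence theorem for nonnegative supermartingales: $\alpha(t)V(t,X_t,\La_t)$ converges almost surely to a finite limit as $t\ra\infty$. Because $\alpha(t)\ra\infty$ by hypothesis, dividing through forces $V(t,X_t,\La_t)\ra0$ a.s., and then $|X_t|^p\leq V(t,X_t,\La_t)\ra0$ gives $\lim_{t\ra\infty}X_t=0$ a.s.

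I expect the main technical obstacle to be the rigorous justification of the supermartingale property rather than the Lyapunov estimate itself: one must set up the generalized It\^o formula so that the jump contribution of the Markov chain $(\La_t)$ is correctly compensated, verify that the resulting $N_t$ is genuinely a local martingale, and control integrability through the localization so that both the passage to expectations and the application of Fatou's lemma are legitimate. Once the supermartingale structure is in place, the remaining steps reduce to the standard Lyapunov--supermartingale arguments.
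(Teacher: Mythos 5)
Your proof is correct, and it is essentially the paper's argument with a small structural variation. The paper splits the two conclusions: for the moment bound it takes expectations in It\^o's formula, derives the differential inequality $u'(t)\leq-\psi(t)u(t)$ for $u(t)=\E V(t,X_t,\La_t)$, and applies Gronwall; for the almost sure statement it then separately introduces the weighted process $V(t,X_t,\La_t)\e^{\int_0^t\psi(s)\d s}$ and invokes the semimartingale convergence theorem. You instead build the single object $\alpha(t)V(t,X_t,\La_t)$ from the start, establish that it is a nonnegative supermartingale, and extract both conclusions from it --- the moment bound via optional stopping and Fatou, the a.s. convergence via the supermartingale convergence theorem. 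What your route buys is a unified argument and a more careful treatment of integrability: the paper's step $\E V(t,X_t,\La_t)=\E V(s,X_s,\La_s)+\int_s^t\E\mathscr A_rV(r,X_r,\La_r)\d r$ tacitly assumes the local martingale $M_t$ is a true martingale, whereas your localization by $\tau_n$ followed by Fatou makes this rigorous without extra hypotheses. The only point to tidy is the choice of localizing sequence: $\tau_n=\inf\{t:|X_t|\geq n\}$ alone bounds the Brownian-integral integrand, but to reduce the compensated jump part one should intersect with a reducing sequence for $N_t$ (or note that the jump integrand $V(s,X_s,j)-V(s,X_s,\La_{s-})$ is also bounded before $\tau_n$ by continuity of $V$ and finiteness of $\S$); this is routine and does not affect the argument.
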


\begin{proof} According to It\^o's formula for $(X_t,\La_t)$ (cf. \cite{Sk} or
  \cite[Theorem 1.45]{MY}), one gets
  \[\d V(t,X_t,\La_t)=\mathscr A_t V(t,X_t,\La_t)\d t+\d M_t,\quad \text{where $(M_t)$ is a martingale}.\]
  Then, for $0\leq s<t$,
  \begin{align*}
    \E V(t,X_t,\La_t)&=\E V(s,X_s,\La_s)+\int_s^t\E\mathscr A_r V(r,X_r,\La_r)\d r\\
    &\leq \E V(s,X_s,\La_s)-\int_s^t\psi(r)\E V(r,X_r,\La_r)\d r.
  \end{align*}
  Set $u(t)=\E V(t,X_t,\La_t)$ for $t\geq 0$. The previous inequality can be rewritten as \[ u(t)-u(s)\leq -\int_s^t\psi(r)u(r)\d r, \quad 0\leq s<t.\]
  Dividing both sides by $t-s$, then letting $s\uparrow t$, we obtain $u'(t)\leq -\psi(t) u(t)$. Applying Gronwall's inequality, we get
  \[\E V(t,X_t,\La_t)=u(t)\leq \E V(0,X_0,\La_0)\exp\Big(-\!\int_0^t\!\psi(r)\d r\Big).
  \]
  By virtue of $V(t,x,i)\geq |x|^p$, it follows that
  \[\E|X_t|^p\leq \E V(t,X_t,\La_t)\leq \E V(0,X_0,\La_0)\exp\Big(-\!\int_0^t\!\psi(r)\d r\Big).\]
  Consequently,
  \[\limsup_{t\ra \infty}\frac{\log \E |X_t|^p}{\log \alpha(t)} \leq -1,\]
  as $\lim_{t\ra \infty} \alpha(t)=\infty$.

  For the sake of asymptotical stability, we  shall use the convergence theorem of semi-martingales (cf. \cite[Theorem 7, pp. 139]{LS}).
  According to It\^o's formula, it holds
  \[\d \Big(V(t,X_t,\La_t)\e^{\int_0^t \psi(s)\d s}\Big)=\big(\psi(t)V(t,X_t,\La_t)+\mathscr A_t V(t,X_t,\La_t)\big)\e^{\int_0^t\psi(s)\d s}\d t+\d \widetilde M(t),\]
  where $(\widetilde M_t)$ is a local martingale with $\widetilde M(0)=0$. Due to \eqref{3.2}, we get
  \[V(t,X_t,\La_t)\e^{\int_0^t\psi(s)\d s}\leq V(0,X_0,\La_0)+\widetilde M(t),\]
  which yields that
  \begin{equation}\label{3.4}\lim_{t\ra \infty} V(t,X_t,\La_t)\e^{\int_0^t\psi(s)\d s}<\infty, \quad \mathrm{a.s.}
  \end{equation}
  Furthermore, as $\lim_{t\ra\infty} \int_0^t\psi(s)\d s=\infty$, it follows from \eqref{3.4} that $\lim_{t\ra\infty} V(t,X_t,\La_t)=0$ a.s. By using the control \eqref{3.1}, it is easy to see $\lim_{t\ra \infty} X_t=0$ a.s.
\end{proof}

In \cite{LC}, K. Liu and A.Y. Chen established the following criterion on the stability of stochastic processes. Consider
\begin{equation}\label{2.1}
\d X_t=b(t,X_t)\d t +\sigma(t,X_t)\d B_t,\ \  X_0=x_0\in \R^d,
\end{equation} where $(B_t)$ is a Brownian motion in $\R^d$.
The infinitesimal generator of $(X_t)$ is given by
\begin{equation}\label{2.2}L_t f(t,x)=\frac{\partial f}{\partial t}+\sum_{i=1}^db_i(t,x)\frac{\partial f}{\partial x_i}+\frac12\sum_{i,j=1}^d a_{ij}(t,x)\frac{\partial^2f}{\partial x_i\partial x_j},
\end{equation}
where $(a_{ij})=\sigma\sigma^\ast$, and $\sigma^\ast$ denotes the transpose of the matrix $(\sigma_{ij})$. Here, the subscript $t$ of $L_t$ is used to emphasize that the infinitesimal generator of $(X_t)$ is time-dependent.

\begin{lem}[\cite{LC}]\label{LC}
Let $\rho\in C^{1,2}(\R^+\times \R^d;\R^+)$ and $\varphi_1$, $\varphi_2$ be two continuous nonnegative functions. Let $\lambda(t)>0$ and $\lambda(t)\uparrow \infty$ as $t\ra \infty$. Assume that for all $x\in \R^d$, $t\geq 0$, there exist   constant  $p>0$  and real numbers $\nu$, $\theta$ such that
\begin{itemize}
  \item[(1)]\ $|x|^p\lambda(t) \leq \rho(t,x),\quad (t,x)\in \R^+\times \R^d$;
  \item[(2)]\ $L_t \rho(t,x)\leq \varphi_1(t)+\varphi_2(t)\rho(t,x),\quad (t,x)\in \R^+\times \R^d$;
  \item[(3)]\ $\displaystyle \limsup_{t\ra \infty}\frac{\log\Big(\int_0^t\varphi_1(s)\d s\Big)}{\log \lambda(t)}\leq \nu$,\ $\dis \limsup_{t\ra \infty}\frac{\int_0^t\varphi_2(s)\d s}{\log\lambda(t)}\leq \theta.$
\end{itemize}Then, whenever $\gamma:=1-\theta-\nu>0$, the solution $(X_t)$ of \eqref{2.1} is $p$-th moment stable with decay $\lambda(t)$ of order $\gamma$, that is,
  \[\limsup_{t\ra \infty}\frac{\log \E|X_t|^p}{\log \lambda(t)}\leq -\gamma<0.\]
\end{lem}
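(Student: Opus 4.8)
The plan is to run the same Lyapunov-function scheme as in the proof of Lemma \ref{t-sw}, with the differential inequality there replaced by the \emph{linear} integral inequality coming from condition (2), and then to invoke Gronwall's inequality in its integral form rather than its differential form.

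First I would apply It\^o's formula to $\rho(t,X_t)$ to obtain
\[\d \rho(t,X_t)=L_t\rho(t,X_t)\,\d t+\d M_t,\]
where $(M_t)$ is a local martingale with $M_0=0$. To take expectations legitimately one localizes along a sequence of stopping times $\tau_n\uparrow\infty$, reducing $(M_t)$ to a genuine martingale and then passing $n\ra\infty$ via $\rho\geq 0$ together with Fatou's lemma (this also shows $u(t):=\E\rho(t,X_t)$ is finite). Inserting the bound (2) and using that the localized martingale has zero mean gives
\[u(t)\leq u(0)+\int_0^t\varphi_1(s)\,\d s+\int_0^t\varphi_2(s)\,u(s)\,\d s,\qquad t\geq 0,\]
with $u(0)=\rho(0,x_0)$.

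Since $\varphi_1,\varphi_2$ are nonnegative, the forcing term $a(t):=u(0)+\int_0^t\varphi_1(s)\d s$ is nondecreasing and the kernel $\varphi_2$ is nonnegative, so the integral form of Gronwall's inequality applies and yields
\[u(t)\leq \Big(u(0)+\int_0^t\varphi_1(s)\,\d s\Big)\exp\Big(\int_0^t\varphi_2(s)\,\d s\Big).\]
Invoking the lower bound (1), namely $|x|^p\lambda(t)\leq \rho(t,x)$, one has $\lambda(t)\E|X_t|^p\leq u(t)$, and hence
\[\E|X_t|^p\leq \frac{1}{\lambda(t)}\Big(u(0)+\int_0^t\varphi_1(s)\,\d s\Big)\exp\Big(\int_0^t\varphi_2(s)\,\d s\Big).\]

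Finally I would extract the decay order. Taking logarithms and dividing by $\log\lambda(t)$, which is positive for large $t$ and tends to $\infty$ because $\lambda(t)\uparrow\infty$, one gets for large $t$
\[\frac{\log\E|X_t|^p}{\log\lambda(t)}\leq -1+\frac{\log\big(u(0)+\int_0^t\varphi_1(s)\d s\big)}{\log\lambda(t)}+\frac{\int_0^t\varphi_2(s)\d s}{\log\lambda(t)}.\]
By subadditivity of $\limsup$ and condition (3)—the additive constant $u(0)$ inside the logarithm being asymptotically immaterial—the three terms on the right have limsup at most $-1$, $\nu$ and $\theta$, whence
\[\limsup_{t\ra\infty}\frac{\log\E|X_t|^p}{\log\lambda(t)}\leq -1+\nu+\theta=-\gamma<0.\]
The main obstacle I anticipate is the rigorous justification of ``taking expectations of the It\^o differential'': for a general $\rho\in C^{1,2}$ the martingale part need not be a true martingale, so the localization argument establishing both the zero-mean property and the finiteness of $u(t)$ is the one genuinely technical point. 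The Gronwall estimate and the asymptotic bookkeeping with $\limsup$ are then routine.
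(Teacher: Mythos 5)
Your argument is correct, and it is the natural (and essentially the only reasonable) direct proof: It\^o's formula with localization, the integral form of Gronwall's inequality with the nondecreasing forcing term $u(0)+\int_0^t\varphi_1(s)\,\d s$, and then the asymptotic bookkeeping against $\log\lambda(t)$. Note, however, that this paper does not prove the lemma at all --- it is imported verbatim from Liu and Chen \cite{LC} --- so there is no in-paper proof to compare against; your route is the one in the cited source. Two small points. First, your step $\limsup_{t\ra\infty}\log\big(u(0)+\int_0^t\varphi_1\big)/\log\lambda(t)\leq\nu$ silently uses $\nu\geq 0$ (otherwise the additive constant $u(0)>0$ forces this limsup to be at least $0$); this is harmless because, as the paper itself remarks right after the lemma, the hypotheses force $\nu$ and $\theta$ to be nonnegative, but it is worth saying explicitly. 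Second, it is cleaner to apply Gronwall to the localized quantity $v_n(t)=\E\rho(t\wedge\tau_n,X_{t\wedge\tau_n})$ (which is manifestly finite) uniformly in $n$ and only then let $n\ra\infty$ by Fatou, rather than first passing to the limit and then needing finiteness of $u$ to run Gronwall --- you correctly flag this as the technical point, and this ordering resolves it. For comparison, the paper's own contribution here is the observation following the lemma: under the extra integrability condition $\int_0^\infty \e^{-\int_0^t\varphi_2(s)\d s}\varphi_1(t)\,\d t<\infty$ one can instead build a Lyapunov function $V(t,x)=\lambda(t)^{-1}\rho(t,x)+\beta(t)/(\alpha(t)\lambda(t))$ and invoke Lemma \ref{t-sw}, which absorbs the $\varphi_1$ term entirely and improves the exponent from $\nu+\theta-1$ to $\theta-1$; your direct argument does not need that integrability hypothesis but pays for it with the extra $\nu$.
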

We mention that Lemma \ref{t-sw} can also be used to investigate the stability of the solutions of stochastic differential equations without regime-switching (i.e. $N=1$), which simplifies and improves the  criterion  Lemma \ref{LC} in some situations. Indeed, if $\int_0^\infty \e^{-\int_0^t\phi_2(s)\d s}\phi_1(t)\d t<\infty$ in Lemma \ref{LC}, and the function $\lambda$ is differentiable, we can find the functions $V$ and $\psi$ satisfying the conditions of Lemma \ref{t-sw}. In fact, take
\[V(t,x)=\lambda(t)^{-1}\rho(t,x)+\frac{\beta(t)}{\alpha(t)\lambda(t)},\]
where $\alpha(t)=\alpha_0\exp\big(-\int_0^t\phi_2(r)\d r\big)$, $\alpha_0$ is a positive constant, and $\beta(t)=\beta_0-\int_0^t\alpha(s)\phi_1(s)\d s$ with a constant $\beta_0>\int_0^\infty \alpha(t)\phi_1(t)\d t$. Then $\mathscr A_tV(t,x)\leq -\big(\frac{\alpha'(t)}{\alpha(t)}+\frac{\lambda'(t)}{\alpha(t)}\big)V(t,x)$. According to Lemma \ref{t-sw}, it  yields that
\[\limsup_{t\ra \infty} \frac{\log \E |X_t|^p}{\log \lambda(t)}\leq \theta-1,\]
which improves Lemma \ref{LC} a little bit since $\theta-1\leq \nu+\theta-1$ by noting that the constants $\nu$, $\theta$ in the condition (3) of Lemma \ref{LC} must be nonnegative.

Lemma \ref{t-sw} provides us a sufficient condition for algebraic stability in terms of Lyapunov functions.  But the coexistence of generators for diffusion processes and Markov chains increases greatly the difficulty to construct the desired Lyapunov functions for regime-switching diffusion processes.  So we need to find more concrete conditions to reflect the characterization of regime-switching processes so as to answer the questions posed in the introduction.

Now we state our first main result of this work, which can be used to answer the questions posed in Examples \ref{ex-1} and \ref{ex-2}.

Let us first introduce a notation.
Let $(X_t,\La_t)$ be defined by \eqref{1.1}. Set
\[L^{(i)}_t f=\frac{\partial f}{\partial t}+\sum_{k=1}^db_k(t,x,i)\frac{\partial f}{\partial x_k}+\frac 12\sum_{k,l=1}^d a_{kl}(t,x,i)\frac{\partial^2 f}{\partial x_k\partial x_l}, \ f\in C^{1,2}(\R^+\!\times \R^d), \]
for every $i\in\S$.
\begin{thm}\label{prop-2}
  Let $(X_t,\La_t)$ be a regime-switching diffusion process satisfying \eqref{1.1}, where $(\La_t)$ is a Markov chain on $\S=\{1,2\}$ with $Q$-matrix $(q_{kl})$. Let $\psi\in C^1(\R^+)$ be a positive decreasing function satisfying $\lim_{t\ra \infty} \psi(t)=0$ and $\lim_{t\ra \infty}\int_0^t\psi(s)\d s=\infty$. Assume that there exist a function $\rho\in C^{1,2}(\R^+\times\R^d;\R^+)$, constants $p>0$ and $\theta\in [0,1)$ such that $\rho(t,x)\geq |x|^p$,
  \begin{equation}\label{con-1}
  \begin{split}
    L^{(1)}_t\rho(t,x)&\leq -\theta\psi(t)\rho(t,x),\\
    L^{(2)}_t\rho(t,x)&\leq -\psi(t)\rho(t,x),
  \end{split}
  \end{equation} for all $(t,x)\in \R^+\times\R^d$.
  Set $\kappa=\frac{q_{21}}{q_{12}}=\frac{q_2}{q_1}$,   and $\eta(\theta,\kappa)=\frac{1+\kappa \theta}{1+\kappa}$.
  Then
  \begin{equation}\label{2.9}
  \limsup_{t\ra \infty} \frac{\log \E|X_t|^p}{\int_0^t\psi(s)\d s}\leq -\eta(\theta,\kappa)<0.
  \end{equation}
  Moreover, $(X_t)$ is almost surely asymptotically stable, i.e. $\lim_{t\ra \infty} X_t=0$ a.s.
\end{thm}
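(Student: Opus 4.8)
The plan is to reduce the moment bound to a two-dimensional linear ODE and then exploit a slow--fast splitting dictated by the spectral structure of the $Q$-matrix, the value $\eta(\theta,\kappa)$ being exactly the rate at which the averaged dynamics decays. First I would split the $p$-th moment over the two states. Put $u_i(t)=\E[\rho(t,X_t)\mathbf{1}_{\La_t=i}]$, so that $\E|X_t|^p\le u_1(t)+u_2(t)$. Applying It\^o's formula to $\rho(t,X_t)\mathbf{1}_{\La_t=i}$ and taking expectations exactly as in the proof of Lemma \ref{t-sw}, and then inserting the two bounds in \eqref{con-1} together with $\sum_j q_{ij}=0$, I obtain
\begin{align*}
u_1'(t)&\le -(\theta\psi(t)+q_1)u_1(t)+q_2u_2(t),\\
u_2'(t)&\le\; q_1u_1(t)-(\psi(t)+q_2)u_2(t).
\end{align*}
The off-diagonal rates $q_1,q_2$ are nonnegative, so the right-hand side is quasimonotone; the comparison theorem for cooperative systems then gives $u_i\le v_i$, where $(v_1,v_2)^\ast$ solves the linear system $v'=(Q^\ast-\psi\,\diag(\theta,1))v$ with matching nonnegative initial data. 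Since this is a Metzler matrix, $v$ stays in the positive quadrant, and it remains to bound $v_1+v_2$.

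Next I would peel off the conjectured rate by writing $v_i(t)=\e^{-\eta\int_0^t\psi}w_i(t)$; then $w'=(Q^\ast+\psi\,\diag(\eta-\theta,\eta-1))w$, whose matrix converges to $Q^\ast$ as $t\ra\infty$. I would change coordinates to the slow and fast directions of $Q^\ast$: the mass $P=w_1+w_2$ (whose spanning left null vector of $Q^\ast$ is $(1,1)$) and the imbalance $z=w_1-\kappa w_2$ measuring the deviation from the stationary ray $w_1/w_2=\kappa$. Using the algebraic identities $\eta-\theta=\frac{1-\theta}{1+\kappa}$ and $(\eta-\theta)\kappa=1-\eta$, which is precisely where the choice of $\eta(\theta,\kappa)$ is forced, a direct computation yields the crucial cancellation
\begin{align*}
P'&=(\eta-\theta)\,\psi\,z,\\
z'&=\big[-(q_1+q_2)+(\eta-\theta)\psi\big]z+\kappa(1-\theta)\,\psi\,w_2,
\end{align*}
with $0\le w_2\le P$. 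Thus the leading drift of the mass $P$ vanishes on the stationary ray, and $P$ can grow only through its coupling to the fast, exponentially contracting variable $z$.

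The main obstacle is to close this coupled, non-autonomous system quantitatively, since a frozen-coefficient eigenvalue argument is not directly legitimate. Because $\psi(t)\ra0$, the $z$-equation contracts at a rate approaching $q_1+q_2>0$ while being forced by a term of size $O(\psi P)$; a variation-of-constants/Gronwall bootstrap (or an auxiliary Lyapunov functional in $(P,z)$) should yield the quasi-static estimate $|z(t)|\le C\psi(t)P(t)$, whence $P'\le C'\psi^2 P$ and $\log P(t)=O\!\big(\int_0^t\psi^2\big)$. The hypotheses $\psi\downarrow0$ and $\int_0^t\psi\ra\infty$ give $\int_0^t\psi^2=o\!\big(\int_0^t\psi\big)$, so $P$ grows subexponentially with respect to $\int_0^t\psi$, and therefore, using $\E|X_t|^p\le v_1+v_2=\e^{-\eta\int_0^t\psi}P(t)$,
\[
\limsup_{t\ra\infty}\frac{\log\E|X_t|^p}{\int_0^t\psi}\le\limsup_{t\ra\infty}\frac{-\eta\int_0^t\psi+\log P(t)}{\int_0^t\psi}\le-\eta(\theta,\kappa)<0.
\]

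Finally, for the almost sure statement I would dualize the same splitting into a genuine Lyapunov function $V(t,x,i)=c_i(t)\rho(t,x)$ with bounded positive weights $c_1>c_2>0$, chosen so that $\mathscr A_t V\le-\tilde\psi(t)V$ with $\int_0^t\tilde\psi$ comparable to $\eta\int_0^t\psi\ra\infty$; the semimartingale convergence argument used in the proof of Lemma \ref{t-sw} then applies verbatim to give $\lim_{t\ra\infty}X_t=0$ a.s.
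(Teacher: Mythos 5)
Your argument takes a genuinely different route from the paper's. The paper works entirely at the level of Lyapunov functions: it sets $V(t,x,i)=\beta_i(t)\rho(t,x)$ with $\beta_2(t)=\bigl(1-(\lambda-\theta)\psi(t)/q_{12}\bigr)\beta_1(t)$, verifies $\mathscr A_tV\leq\bigl(-\eta_{\veps}(\theta,\kappa)\psi+\beta_2'/\beta_2\bigr)V$ for large $t$, and produces $\eta(\theta,\kappa)$ by optimizing the free parameter $\lambda$ in $\min_{\lambda}\max\{-\lambda,\,-1+\kappa(\lambda-\theta)+\veps\}$ (the optimum being $\lambda=\eta$), after which the argument of Lemma \ref{t-sw} finishes. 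You instead pass to the cooperative ODE system for the state-conditioned moments $u_i(t)=\E[\rho(t,X_t)\mathbf{1}_{\La_t=i}]$, compare with the linear flow $v'=(Q^\ast-\psi\,\diag(\theta,1))v$, and diagonalize along the slow and fast directions of $Q^\ast$. I checked the differential inequalities, the Kamke comparison step, the identities $\eta-\theta=(1-\theta)/(1+\kappa)$ and $\kappa(\eta-\theta)=1-\eta$, and the resulting equations for $P$ and $z$: all correct. Your derivation makes transparent why $\eta$ is the average of the two rates $\theta$ and $1$ against the stationary law of $Q$ and why the switching enters only through $\kappa$; the paper's weights $(1,1-h)$ are essentially the left Perron eigenvector of your frozen matrix, so the two proofs are dual to one another. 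Your route also isolates cleanly where the spectral gap $q_1+q_2$ is used, which the paper's optimization hides.

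The one step that needs repair is the quasi-static estimate $|z(t)|\leq C\psi(t)P(t)$. With a uniform $C$ this can fail: $\psi$ is only assumed decreasing, so it may drop abruptly, and the Duhamel integral $\int_0^t e^{-c_0(t-s)}\psi(s)P(s)\,\d s$ then retains values $\psi(s)$ much larger than $\psi(t)$ from just before the drop. Fortunately the sharp form is not needed. Fix $0<c_0<q_1+q_2$; since $(\eta-\theta)\psi(t)\ra 0$, the $z$-equation contracts at rate at least $c_0$ for $t$ large. Bounding $P(s)\leq M(t):=\sup_{r\leq t}P(r)$ and splitting the Duhamel integral at $s=t/2$ gives, for large $t$,
\begin{equation*}
|z(t)|\ \leq\ C e^{-c_0 t}|z(0)|\ +\ C M(t)\bigl(\psi(t/2)+e^{-c_0 t/2}\bigr),
\end{equation*}
hence $P'(t)\leq C\psi(t)\bigl(\psi(t/2)+e^{-c_0 t/2}\bigr)\bigl(M(t)+|z(0)|\bigr)$, and Gronwall yields
\begin{equation*}
\log M(t)\ \leq\ C\int_0^t\psi(s)\bigl(\psi(s/2)+e^{-c_0 s/2}\bigr)\,\d s\ +\ O(1)\ =\ o\Bigl(\int_0^t\psi(s)\,\d s\Bigr),
\end{equation*}
the last step using exactly your two hypotheses $\psi\downarrow 0$ and $\int_0^t\psi\ra\infty$. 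This is all your final display requires. The almost sure statement is obtained as in Lemma \ref{t-sw} once bounded weights $c_i(t)$ are exhibited; the paper's $\beta_1,\beta_2$ serve this purpose.
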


\begin{proof}
  We shall use Lemma \ref{t-sw} to prove this theorem, and the key point is to find a suitable Lyapunov function $V(t,x,i)$. To this aim, let us first introduce two auxiliary functions $\beta_1(t)$ and $\beta_2(t)$.
  Let $\beta_1(t)\in C^1(\R^+)$ be a positive function and $\beta_1(t)\geq c>0$ for $t\geq 0$.  Set
  \[h(t)=\frac{(\lambda-\theta) \psi(t)}{q_{12}},\quad \beta_2(t)=(1-h(t))\beta_1(t),\ t\geq 0,\]
   where $\lambda\geq \theta$ is a constant to be determined later.
  Set $V(t,x,i)=\beta_i(t)\rho(t,x)$ for $i=1,2$, $(t,x)\in \R^+\!\times\R^d$.
  Then
  \begin{equation}\label{2.10}
  \begin{split}
    \mathscr A_tV(t,x,1)&=\beta_1(t) L^{(1)}_t\rho(t,x)+\beta_1'(t)\rho(t,x)+ q_{12} (\beta_2(t)-\beta_1(t))\rho(t,x)\\
    &\leq \Big(- \theta\psi(t)+q_{12}\big(\frac{\beta_2(t)}{\beta_1(t)}-1\big)
    +\frac{\beta_1'(t)}{\beta_1(t)}\Big)V(t,x,1)\\
    &=\Big(-\lambda\psi(t)+\frac{\beta_1'(t)}{\beta_1(t)}\Big)V(t,x,1),\quad (t,x)\in \R^+\!\times\R^d,
  \end{split}
  \end{equation}
  \begin{equation}\label{2.11}
  \begin{split}
    \mathscr A_t V(t,x,2)&=\beta_2(t) L^{(2)}_t\rho(t,x)+\beta_2'(t)\rho(t,x)+q_{21}(\beta_1(t)-\beta_2(t))\rho(t,x)\\
    &\leq \Big(-\psi(t)+q_{21}\big(\frac{\beta_1(t)}{\beta_2(t)}-1\big)
    +\frac{\beta_2'(t)}{\beta_2(t)}\Big)V(t,x,2)\\
    &=\Big(-\psi(t)+\frac{q_{21}(\lambda-\theta)\psi(t)}{q_{12}-(\lambda-\theta)\psi(t)}
    +\frac{\beta_2'(t)}{\beta_2(t)}\Big)V(t,x,2), \quad (t,x)\in \R^+\times\R^d.
  \end{split}
  \end{equation}
  Since
  \[\lim_{t\ra \infty}\frac{-\psi(t)+
  \frac{q_{21}(\lambda-\theta)\psi(t)}{q_{12}-(\lambda-\theta)\psi(t)}}{\psi(t)}
  =-1+\kappa(\lambda-\theta),\]
  for any $\veps>0$ there exists a constant $r_0>0$ such that for all $t>r_0$,
  \begin{equation}\label{2.12}
  \frac{-\psi(t)+
  \frac{q_{21}(\lambda-\theta)\psi(t)}{q_{12}-(\lambda-\theta)\psi(t)}}{\psi(t)}
  \leq -1+\kappa(\lambda-\theta)+\veps.
  \end{equation}
  Moreover,
  \begin{equation*}
    \frac{\beta_2'(t)}{\beta_2(t)}=-\frac{(\lambda-\theta)\psi'(t)}
    {q_{12}-(\lambda-\theta)\psi(t)}+\frac{\beta_1'(t)}{\beta_1(t)}.
  \end{equation*}
  As $\psi(t)$ decreases to $0$, there exists a constant $r_1>0$ such that
  for all $t>r_1$, $\lambda\in [\theta,2]$, $q_{12}-(\lambda-\theta)\psi(t)>0$, and hence
  \begin{equation}\label{2.13}
  \frac{\beta_2'(t)}{\beta_2(t)}\geq \frac{\beta_1'(t)}{\beta_1(t)}.
  \end{equation}
  Combining \eqref{2.10}, \eqref{2.11} with \eqref{2.12}, \eqref{2.13}, we get
  \begin{equation}\label{2.14}
  \mathscr A_t V(t,x,i)\leq \Big(\max\{-\lambda, -1+\kappa(\lambda-\theta)+\veps\}\psi(t)+\frac{\beta_2'(t)}{\beta_2(t)}\Big)V(t,x,i),
  \end{equation} for $t\geq r_0\vee r_1:=\max\{r_0,r_1\},\ x\in\R^d,\ i=1,2$.
  Now, by the arbitrariness of $\lambda$ on $[\theta,\infty)$, we obtain   that
  \begin{align*}&\min_{\theta\leq \lambda<\infty}\max\{-\lambda,-1+\kappa(\lambda-\theta)+\veps\}
  \\
  &=\min_{\theta\leq \lambda\leq 2}\max\{-\lambda,-1+\kappa(\lambda-\theta)+\veps\}\\
  &=-\frac{\kappa\theta+1-\veps}{1+\kappa}
  =:-\eta_\veps(\theta,\kappa).
  \end{align*}
  It is clear that $\lim_{\veps\ra 0}\eta_{\veps}(\theta,\kappa)=\eta(\theta,\kappa)$.
  Consequently, there exists $\lambda\geq \theta$ and corresponding function $\beta_2(t)$ and $V(t,x,i)$ such that
  \begin{equation}\label{2.15}
  \mathscr A_tV(t,x,i)\leq \big(-\eta_{\veps}(\theta,\kappa)\psi(t)+\frac{\beta_2'(t)}{\beta_2(t)}\big)V(t,x,i)
  \end{equation} for $t\geq  r_0\vee r_1$, $x\in\R^d$, $i=1,2$.

  We also need to compare the value of $\beta_1(t)$ and $\beta_2(t)$. As $\frac{\beta_2(t)}{\beta_1(t)}=1-h(t)$ and $\lim_{t\ra \infty} h(t)=\lim_{t\ra \infty}(\lambda-\theta)\psi(t)/q_{12}=0$, there exists a constant $r_2>0$ such that for all $t\geq r_2$,
  \[(1-\veps)\beta_1(t)\leq \beta_2(t)\leq \beta_1(t)(1+\veps).\]
  Invoking $\rho(t,x)\geq |x|^p$ for $(t,x)\in \R^+\times\R^d$, we get
  \begin{equation}\label{2.16}
  V(t,x,2)\geq \beta_2(t) |x|^p,\ \ V(t,x,1)\geq\beta_1(t)|x|^p\geq (1+\veps)^{-1}\beta_2(t)|x|^p,
  \end{equation} for $t\geq r_2$ and $x\in \R^d$.

  Set
  \[\zeta(t)=\!\!\max\Big\{\!-\theta\psi(t)+q_{12}\big(\frac{\beta_2(t)}{\beta_1(t)}-1\big),-\theta\psi(t)
  +q_{21}\big(\frac{\beta_1(t)}{\beta_2(t)}-1\big)\Big\}, \ t\in [0,r_0\vee r_1],\]
  and
  \[\zeta(t)=-\eta_{\veps}(\theta,\kappa),\quad t>r_0\vee r_1.\]
  By It\^o's formula and \eqref{2.10}, \eqref{2.11}, \eqref{2.15}, we have
  \[\E V(t,X_t,\La_t)\leq \E V(s,X_s,\La_s)+\E\int_s^t\Big(\zeta(r)\psi(r)+\frac{\beta_2'(r)}{\beta_2(r)}\Big)
  V(r,X_r,\La_r)\,\d r,\quad 0\leq s<t.\]
  Hence, by Gronwall's inequality and \eqref{2.16}, we get
  \[(1+\veps)^{-1}\beta_2(t)\E |X_t|^p\leq V(0,x_0,i_0)\exp\Big(\int_0^t\big(\zeta(s)\psi(s)+\frac{\beta_2'(s)}{\beta_2(s)}\big)\d s\Big).\]
  Invoking that $\lim_{t\ra\infty}\int_0^t\psi(s)\d s=\infty$,
  \begin{equation}\label{2.17}
  \begin{split}
  \limsup_{t\ra \infty} \frac{\log \E|X_t|^p}{\int_0^t\psi(s)\d s}&\leq \limsup_{t\ra \infty} \frac{\int_0^t\zeta(s)\psi(s)\d s}{\int_0^t\psi(s)\d s}\\
  &\leq \limsup_{t\ra \infty}\frac{\int_0^{r_0\vee r_1}\zeta(s)\psi(s)\d s-\int_{r_0\vee r_1}^t \eta_{\veps}(\theta,\kappa)\psi(s)\d s}{\int_0^t\psi(s)\d s}\\
  &=\limsup_{t\ra \infty}\frac{-\int_{r_0\vee r_1}^t\eta_\veps(\theta,\kappa)\psi(s)\d s}{\int_0^t\psi(s)\d s}=-\eta_\veps(\theta,\kappa).
  \end{split}
  \end{equation}
  Note that the left hand side of \eqref{2.17} is independent of $\veps$. So, letting  $\veps$ tend to $0$, we obtain
  \begin{equation*}\label{2.18}
  \limsup_{t\ra \infty} \frac{\log\E|X_t|^p}{\int_0^t\psi(s)\d s}\leq -\eta(\theta,\kappa).
  \end{equation*}
  To complete the proof, we only need to use the similar argument of Lemma \ref{t-sw} to obtain the almost surely asymptotical stability of $(X_t)$, whose details are omitted.
\end{proof}

The questions posed in the introduction can be easily answered by Theorem \ref{prop-2}.
For Example \ref{ex-1}, we take $\rho(t,x)=x^2$, then we can set $\psi(t)\equiv 1$ and $\theta=0$ in \eqref{con-1}. By \eqref{2.9}, it yields
\[\limsup_{t\ra \infty} \frac{\log \E X_t^2}{t}\leq -\frac{q_1}{q_1+q_2}.\]
Similar discussion can yield further the following interesting conclusion. When $(X_t)$ is $p$-th moment stable with decay $\lambda_1(t)$ in the fixed state ``1'' and $p$-th moment stable with decay $\lambda_2(t)$ in the fixed state ``2" satisfying $\lim_{t\ra \infty} \lambda_2(t)/\lambda_1(t)=0$. Then $(X_t)$ will be ultimately $p$-th moment stable with decay $\lambda_1(t)$.
For Example \ref{ex-2}, we still take $\rho(t,x)=x^2$, then we set $\psi(t)=\frac{m}{1+t}$ and take $\theta=1/m$ in \eqref{con-1}. By \eqref{2.9}, we get
\[\limsup_{t\ra \infty} \frac{\log \E X_t^2}{\log (1+t)^m}\leq - \frac{q_1+q_2/m}{q_1+q_2}.\]

Next, we extend Theorem \ref{prop-2} to deal with  the case $N>2$.
\begin{prop}\label{prop-3}
Let $(X_t,\La_t)$ be a regime-switching diffusion process satisfying \eqref{1.1} with $\S=\{1,\ldots,N\}$ and $N\geq 2$. Let $\psi\in C^1(\R^+)$ be a positive decreasing function satisfying $\dis\lim_{t\ra \infty}\psi(t)=0$ and $ \lim_{t\ra \infty}\int_0^t\psi(s)\d s=\infty$. Assume that there exists a function $\rho\in C^{1,2}(\R^+\times\R^d;\R^+)$, positive constants $p$, $c_1,\,\ldots,\,c_N$ such that
$\rho(t,x)\geq |x|^p$ and
\[L^{(i)}\rho(t,x)\leq -c_i\rho(t,x)\]
for all $(t,x)\in \R^+\times\R^d$, $i\in \S$. Set
\begin{align*}
  \S_0=\big\{i\in \S;\ c_i=\min\{c_k;k\in\S\}\big\},\quad \S_1=\S\backslash\S_0.
\end{align*}
Set $q_{jS_1}=\sum_{i\in \S_1} q_{ji}$ for $j\in \S_0$ and $q_{i\S_0}=\sum_{j\in\S_0}q_{ij}$ for $i\in \S_1$.
Assume that both $\S_0$ and $\S_1$ are not empty, and $q_{jS_1}>0$ for every $j\in \S_0$. Set \[\kappa=\frac{\max_{i\in\S_1} q_{i\S_0}}{\min_{j\in \S_0}q_{jS_1}},\  \ \text{and}\ \ \eta=\frac{\tilde c_2+\kappa \tilde c_1}{1+\kappa},\] where $\tilde c_1=\min\{c_i; i\in\S_0\}$, $\tilde c_2=\min\{c_i; i\in \S_1\}$.
Then
\begin{equation}\label{2.19}
\limsup_{t\ra \infty}\frac{\log \E|X_t|^p}{\int_0^t \psi(s)\d s}\leq -\eta<0.
\end{equation}
\end{prop}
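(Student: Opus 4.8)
The plan is to follow the strategy of Theorem \ref{prop-2}, reducing the $N$-state problem to an effective two-group problem by collapsing the slow class $\S_0$ and the fast class $\S_1$ into two blocks, each carrying a single weight. (I read the decay hypothesis in its $\psi$-weighted form $L^{(i)}\rho(t,x)\le -c_i\psi(t)\rho(t,x)$, which is the form consistent with Theorem \ref{prop-2} and with the factor $\int_0^t\psi(s)\d s$ appearing in the conclusion.) By Lemma \ref{t-sw} it suffices to exhibit a Lyapunov function with $V(t,x,i)\ge (\text{const})\,|x|^p$ and $\mathscr A_t V(t,x,i)\le(-\eta_\veps\psi(t)+g(t))V(t,x,i)$ for all $i$ and all large $t$, where $\eta_\veps\to\eta$ as $\veps\to0$ and $\int_0^t g(s)\,\d s$ stays bounded; then \eqref{2.19} follows from the Gronwall estimate carried out in \eqref{2.17}.

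I would take $V(t,x,i)=\beta_i(t)\rho(t,x)$ with weights constant on each block: fix $\beta_1\in C^1(\R^+)$ with $\beta_1\ge c>0$, set $\beta_i=\beta_1$ for $i\in\S_0$ and $\beta_i=\beta_2:=(1-h)\beta_1$ for $i\in\S_1$, where $h(t)=\frac{(\lambda-\tilde c_1)\psi(t)}{\min_{j\in\S_0}q_{jS_1}}$ and $\lambda\in[\tilde c_1,\tilde c_2]$ is a free parameter to be optimised. The assumption $q_{jS_1}>0$ for every $j\in\S_0$ makes the denominator positive and ensures $h(t)\downarrow0$. Using conservativeness of $(q_{ij})$ exactly as in \eqref{2.10}, $\mathscr A_tV(t,x,i)=\beta_iL^{(i)}\rho+\beta_i'\rho+\rho\sum_{j\ne i}q_{ij}(\beta_j-\beta_i)$. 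The crucial simplification is that intra-class transitions contribute $\beta_j-\beta_i=0$, so the switching sum only sees transitions between the two blocks: for $i\in\S_0$ it equals $-h\,q_{iS_1}\,\rho$, and for $i\in\S_1$ it equals $\frac{h}{1-h}\,q_{i\S_0}\,\rho$.

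For $i\in\S_0$ I would use $q_{iS_1}\ge\min_{j\in\S_0}q_{jS_1}$ to bound the switching contribution by $-(\lambda-\tilde c_1)\psi$, which together with $L^{(i)}\rho\le-\tilde c_1\psi\rho$ gives coefficient $-\lambda\psi+\beta_1'/\beta_1$. For $i\in\S_1$ I would use $q_{i\S_0}\le\max_{i\in\S_1}q_{i\S_0}$ and $\frac{h}{1-h}\le(1+\veps)h$ for large $t$ to bound the switching contribution by $\kappa(\lambda-\tilde c_1)(1+\veps)\psi$, which with $L^{(i)}\rho\le-\tilde c_2\psi\rho$ (valid since $c_i\ge\tilde c_2$) yields coefficient $\le(-\tilde c_2+\kappa(\lambda-\tilde c_1)+\veps')\psi+\beta_2'/\beta_2$. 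Since $\psi$ decreases we have $h'\le0$, hence $\beta_2'/\beta_2\ge\beta_1'/\beta_1$, so taking $g=\beta_2'/\beta_2$ as the common weight-drift term both blocks obey $\mathscr A_tV(t,x,i)\le(\max\{-\lambda,\,-\tilde c_2+\kappa(\lambda-\tilde c_1)+\veps'\}\psi+g)V(t,x,i)$. Minimising the max over $\lambda$ is a scalar computation: the two branches cross at $\lambda=\eta=\frac{\tilde c_2+\kappa\tilde c_1}{1+\kappa}\in(\tilde c_1,\tilde c_2)$ with common value $-\eta$, so the optimal choice is $\lambda=\eta$ and $\eta_\veps\to\eta$.

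Finally, since $\beta_1\ge c>0$ and $h\to0$, all weights lie between positive constants, so $V(t,x,i)\ge(\text{const})\,|x|^p$ and $\int_0^t g\,\d s=\log(\beta_2(t)/\beta_2(0))$ remains bounded; running the Gronwall argument of \eqref{2.17} and letting $\veps\to0$ gives \eqref{2.19}. The main obstacle is the second step: recognising that assigning one weight per block annihilates all intra-class switching terms and collapses the $N$-state coupling into the same two-parameter optimisation as in Theorem \ref{prop-2}. The particular constants $\min_{j\in\S_0}q_{jS_1}$ and $\max_{i\in\S_1}q_{i\S_0}$, and hence the form of $\kappa$, are forced by the need for a uniform lower bound on the decay boost gained in the slow block and a uniform upper bound on the decay loss incurred in the fast block.
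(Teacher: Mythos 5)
Your proposal is correct and follows essentially the same route as the paper: block-constant weights $\beta_1$ on $\S_0$ and $\beta_2=(1-h)\beta_1$ on $\S_1$ with the same $h$, the observation that intra-class switching terms cancel, and the same two-branch optimisation over $\lambda$ carried through the Gronwall argument of Theorem \ref{prop-2}. You also correctly use the bound $L^{(i)}\rho\le -\tilde c_2\psi\rho$ on $\S_1$ (the paper's displayed computation writes $-\tilde c_1\psi(t)$ there, which appears to be a typo, since only $-\tilde c_2$ yields the stated $\eta$), and your reading of the hypothesis as $L^{(i)}\rho\le -c_i\psi(t)\rho$ matches what the paper's proof actually uses.
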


\begin{proof}
  For the sake of simplicity of notation, we may reorder $\S$ so that
  \[c_1\leq c_2\leq \ldots\leq c_N.\]
  There exists some $k_0\leq N$ such that $c_1=\ldots=c_{k_0}$ and hence $\S_0=\{1,\ldots,k_0\}$. If $k_0=N$, then the desired result is trivial. So, we only need to consider the case $k_0<N$.

  Let $\beta_1(t)\in C^1(\R^+)$ be a positive function and $\beta_1(t)\geq c>0$ for all $t\geq 0$.
  Set \[h(t)=\frac{(\lambda-c_1)\psi(t)}{\min_{j\in\S_0}q_{jS_1}}, \quad \beta_2(t)=(1-h(t))\beta_1(t),\] which is well-defined due to our assumption that $q_{jS_1}>0$ for every $j\in\S_0$. As $\lim_{t\ra \infty} \psi(t)=0$, $\beta_2(t)<\beta_1(t)$ for $t$ large enough.
  Define $V(t,x,i)=\rho(t,x)\beta_1(t)$ for $i\in\S_0$, and $V(t,x,i)=\rho(t,x)\beta_2(t)$ for $i\in\S_1$.
  Then, we obtain that for $t$ large enough, for $i\in \S_0$,
  \begin{align*}
    \mathscr A_t V(t,x,i)&\leq \Big[-c_1\psi(t)+q_{iS_1}\Big(\frac{\beta_2(t)}{\beta_1(t)}-1\Big)
    +\frac{\beta_1'(t)}{\beta_1(t)}\Big]V(t,x,i)\\
    &\leq \Big[-\lambda \psi(t)+\frac{\beta_1'(t)}{\beta_1(t)}\Big]V(t,x,i),
  \end{align*}
  and for $i\in \S_1$,
  \begin{align*}
    \mathscr A_t V(t,x,i)&\leq
    \Big[-c_i\psi(t)+q_{i\S_0}\Big(\frac{\beta_1(t)}{\beta_2(t)}-1\Big)
    +\frac{\beta_2'(t)}{\beta_2(t)}\Big]V(t,x,i)\\
    &\leq \Big[-\tilde c_1\psi(t)+\big(\max_{i\in\S_1}q_{i\S_0}\big)\Big(\frac{h(t)}{1-h(t)}\Big)
    +\frac{\beta_2'(t)}{\beta_2(t)}\Big]V(t,x,i)\\
    &\leq \Big[-\tilde c_1\psi(t)+\big(\max_{i\in\S_1}q_{i\S_0}\big)
    \frac{(\lambda-c_1)\psi(t)}{\min_{j\in\S_0}q_{j\S_1}-(\lambda-c_1)\psi(t)}
    +\frac{\beta_2'(t)}{\beta_2(t)}\Big]V(t,x,i).
  \end{align*}
Then, applying  a similar technique used in the argument of Theorem \ref{prop-2}, we can obtain the desired result \eqref{2.19}.
\end{proof}

Based on the theory of M-matrix, we can provide an interesting result on the $p$-th moment stability with exponential decay (Theorem \ref{cor-1} below). Recall first some notation on M-matrix. By $B\geq 0$ for a square matrix $B$, we mean that all elements of $B$ is nonnegative. A square matrix $A=(a_{ij})_{n\times n}$ is called an    M-Matrix  if $A$ can be expressed in the form $A=sI-B$ with some $B\geq 0$ and $s\geq\mathrm{Ria}(B)$, where $I $ denotes the $n\times n$ identity matrix, and $\mathrm{Ria}(B)$ denotes the spectral radius of $B$. When $s>\mathrm{Ria}(B)$,   $A$ is called a  nonsingular M-matrix. There exist many equivalent definitions of the nonsingular M-matrix. We refer the reader to \cite{BP} or \cite{SX14} for the characterization of nonsingular M-matrix.

\begin{lem}\label{prop-1}
  Let $(X_t,\La_t)$ be the same process as that of Lemma \ref{t-sw}. Assume that there exist a function $\rho\in C^{1,2}(\R^+\times\R^d;\R^+)$,   constants $p>0$ and $c_i\in \R$ for every $i\in\S$ such that
  \begin{itemize}
    \item[(i)]\ $\rho(t,x)\geq |x|^p \quad \forall\,(t,x)\in \R^+\times \R^d$;
    \item[(ii)]\ $L_t^{(i)} \rho(t,x)\leq c_i\,\rho(t,x)\quad \forall\,(t,x)\in \R^+\times \R^d$.
  \end{itemize}
  $\diag(c_1,\ldots,c_N)$ denotes the diagonal matrix generated by the vector $(c_1,\ldots,c_N)$ as usual. If the matrix $-\big(\diag(c_1,\ldots,c_N)+Q\big)$ is a nonsingular M-matrix, then there exists a constant $\gamma>0$ such that
  \[\limsup_{t\ra\infty}\frac{\log \E|X_t|^p}{t}\leq -\gamma<0,\]
  which means that $(X_t)$ is $p$-th moment stable with exponential decay.
\end{lem}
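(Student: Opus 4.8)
The plan is to reduce the statement to Lemma \ref{t-sw} by using the standard characterization of nonsingular M-matrices to build a state-dependent weight that converts the pointwise bounds (ii) into a single Lyapunov inequality with a \emph{constant} decay rate.

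First I would invoke M-matrix theory. Since $A:=-\big(\diag(c_1,\ldots,c_N)+Q\big)$ is a nonsingular M-matrix, there exists a strictly positive vector $\xi=(\xi_1,\ldots,\xi_N)^\ast\gg 0$ with $A\xi\gg 0$ (one of the equivalent definitions; see \cite{BP} or \cite{SX14}). Written out componentwise this says $c_i\xi_i+\sum_{j\in\S}q_{ij}\xi_j<0$ for every $i\in\S$. Setting $\gamma:=\min_{i\in\S}\big(-(c_i\xi_i+\sum_{j\in\S}q_{ij}\xi_j)/\xi_i\big)>0$ then yields the uniform bound $c_i\xi_i+\sum_{j\in\S}q_{ij}\xi_j\leq-\gamma\,\xi_i$ for all $i\in\S$. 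Because multiplying $\xi$ by a positive scalar preserves all of these inequalities, I may normalize so that $\xi_i\geq 1$ for every $i\in\S$.

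Next I would take $V(t,x,i)=\xi_i\,\rho(t,x)$ and check the hypotheses of Lemma \ref{t-sw} with the constant function $\psi(t)\equiv\gamma$. Condition \eqref{3.1} is immediate from $\rho(t,x)\geq|x|^p$ together with $\xi_i\geq 1$. For \eqref{3.2}, since $\xi_i$ is a constant the operator $\mathscr A_t$ acts on $V$ as $L^{(i)}_t$ on the diffusion part plus the chain part $\sum_{j\in\S}q_{ij}V(t,x,j)$, so using (ii) and then the M-matrix bound,
\[
\mathscr A_t V(t,x,i)=\xi_i L^{(i)}_t\rho(t,x)+\sum_{j\in\S}q_{ij}\xi_j\rho(t,x)\leq\Big(c_i\xi_i+\sum_{j\in\S}q_{ij}\xi_j\Big)\rho(t,x)\leq-\gamma\,\xi_i\rho(t,x)=-\gamma\,V(t,x,i).
\]
Thus \eqref{3.2} holds with $\psi\equiv\gamma$.

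Finally, since $\alpha(t)=\exp\big(\int_0^t\gamma\,\d s\big)=\e^{\gamma t}\to\infty$, Lemma \ref{t-sw} applies and gives $\limsup_{t\ra\infty}\frac{\log\E|X_t|^p}{\gamma t}\leq-1$, that is $\limsup_{t\ra\infty}\frac{\log\E|X_t|^p}{t}\leq-\gamma<0$, which is exactly $p$-th moment stability with exponential decay. The only genuinely nontrivial ingredient is the existence of the positive vector $\xi$ with $A\xi\gg 0$; this is a standard equivalent formulation of the nonsingular M-matrix property, so I would cite it rather than reprove it, and everything else is a direct computation feeding into Lemma \ref{t-sw}.
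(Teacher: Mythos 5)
Your proof is correct and follows essentially the same route as the paper: pick $\xi\gg 0$ with $(\diag(c_1,\ldots,c_N)+Q)\xi\ll 0$ from the nonsingular M-matrix property, set $V(t,x,i)=\xi_i\rho(t,x)$, and feed the resulting inequality $\mathscr A_tV\leq-\gamma V$ into Lemma \ref{t-sw}. Your version is in fact marginally tidier, since you normalize $\xi_i\geq 1$ to verify condition \eqref{3.1} explicitly, a step the paper leaves implicit.
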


\begin{proof}
  As the matrix $-(\diag(c_1,\ldots, c_N) +Q)$ is a nonsingular M-matrix, there exists a vector $\bm{\xi}\gg 0$ such that
  \[\bm{\beta}=(\beta_1,\ldots,\beta_N)^\ast=(\diag(c_1,\ldots, c_N)+Q)\bm{\xi}\ll 0,\]
  i.e. all elements of $\bm{\beta}$ are strictly less than $0$.
  Set $V(t,x,i)=\rho(t,x)\xi_i$ for $(t,x,i)\in \R^+\!\times\R^d\times\! \S$. By virtue of  the conditions (ii), we get
  \begin{align*}
    \mathscr A_t V(t,x,i)
    &\leq \big(\diag(c_1,\ldots, c_N)+Q\big)\bm{\xi}(i)\rho(t,x)\\
    &\leq \max_{k\in \S}\beta_k \,\rho(t,x)\leq \frac{\max_{k\in \S}\beta_k}{\min_{k\in \S}\xi_k}V(t,x,i).
  \end{align*}
  According to Lemma \ref{t-sw}, we obtain that
  \[\limsup_{t\ra \infty}\frac{\E|X_t|^p}{t}\leq \frac{\max_{k\in \S}\beta_k}{\min_{k\in \S}\xi_k}=:-\gamma<0.\]
\end{proof}

\begin{thm}\label{cor-1}
Let $(X_t,\La_t)$ be a regime-switching diffusion process satisfying \eqref{1.1}.  Assume that there exist a function $\rho \in C^{1,2}(\R^+\times\R^d;\R^+)$, a state $i_0\in\S$, constants $c_{i_0}>0$, $p>0$ and continuous nonnegative functions $\psi_{i}$ for $i\in \S\backslash\{i_0\}$ such that
\begin{itemize}
    \item[(i)]\ $\rho(t,x)\geq |x|^p \quad \forall\,(t,x)\in \R^+\times \R^d$;
    \item[(ii)]\ $L^{(i_0)}\rho(t,x)\leq -c_{i_0} \rho(t,x)\quad \forall\, (t,x)\in\R^+\times \R^d$;
    \item[(iii)]\ $L_t^{(i)} \rho(t,x)\leq -\psi_i(t)\rho(t,x)\quad \forall\,(t,x)\in \R^+\times \R^d$, $i\in \S\backslash\{i_0\}$.
  \end{itemize}
Then there exists a constant $\gamma>0$ such that
\begin{equation}\label{e-7}\limsup_{t\ra \infty}\frac{\log\E|X_t|^p}{t}\leq -\gamma<0.\end{equation} Moreover, the process $(X_t)$ is also almost surely asymptotically stable.
\end{thm}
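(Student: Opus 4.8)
The plan is to reduce the statement to the M-matrix criterion of Lemma \ref{prop-1}. The essential observation is that, although the functions $\psi_i$ in condition (iii) may decay to $0$ (so that the process is only algebraically, or perhaps not at all, stable in the states $i\neq i_0$), they are nonnegative, and since $\rho\geq 0$ this already yields the crude but \emph{constant} bound $L_t^{(i)}\rho(t,x)\leq -\psi_i(t)\rho(t,x)\leq 0$ for every $i\neq i_0$. Thus conditions (ii)--(iii) together supply the hypothesis (ii) of Lemma \ref{prop-1} with the constants $-c_{i_0}<0$ at the distinguished state $i_0$ and $0$ at all remaining states. Writing $D=\diag(d_1,\ldots,d_N)$ with $d_{i_0}=-c_{i_0}$ and $d_i=0$ for $i\neq i_0$, it remains only to verify that $-(D+Q)$ is a nonsingular M-matrix; the conclusion \eqref{e-7} and the exponential rate $\gamma>0$ then follow directly from Lemma \ref{prop-1}.

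The heart of the argument, and the step I expect to be the main obstacle, is verifying this M-matrix property. I would proceed as follows. The matrix $D+Q$ has nonnegative off-diagonal entries $q_{ij}\geq 0$ ($i\neq j$), i.e. it is essentially nonnegative (Metzler), and it is irreducible, since adjoining the diagonal matrix $D$ does not disturb the irreducibility inherited from $Q$. Applying Perron--Frobenius theory to $D+Q+\alpha I$ for $\alpha$ large enough to make it nonnegative, the spectral abscissa $\mu:=\max\{\mathrm{Re}\,\lambda:\lambda\in\mathrm{spec}(D+Q)\}$ is a simple real eigenvalue with a strictly positive eigenvector $\bm{\xi}\gg 0$. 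Now $Q$ is conservative, so $Q\mathbf 1=0$ and the spectral abscissa of $Q$ equals $0$; since $D\leq 0$ entrywise with $d_{i_0}=-c_{i_0}<0$, we have $D+Q\leq Q$ entrywise with strict inequality in the $(i_0,i_0)$ entry. Strict monotonicity of the Perron root for irreducible essentially nonnegative matrices then forces $\mu<0$. Consequently $-(D+Q)\bm{\xi}=-\mu\,\bm{\xi}\gg 0$ with $\bm{\xi}\gg 0$, and since $-(D+Q)$ has nonpositive off-diagonal entries, this exhibits $-(D+Q)$ as a nonsingular M-matrix by the standard characterization recalled in \cite{BP, SX14}.

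With the M-matrix property established, Lemma \ref{prop-1} immediately yields a constant $\gamma>0$ with $\limsup_{t\ra\infty}\frac{\log\E|X_t|^p}{t}\leq -\gamma<0$, which is \eqref{e-7}. For the almost sure asymptotic stability I would reuse the Lyapunov function built inside the proof of Lemma \ref{prop-1}: setting $V(t,x,i)=\rho(t,x)\xi_i$ gives $\mathscr A_tV\leq -\gamma\,V$ and $V(t,x,i)\geq(\min_{k}\xi_k)|x|^p$, so that, after rescaling $V$ by $(\min_{k}\xi_k)^{-1}$, the pair $(V,\psi)$ with the constant choice $\psi(t)\equiv\gamma$ satisfies all hypotheses of Lemma \ref{t-sw}. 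Since $\alpha(t)=\e^{\gamma t}\ra\infty$, the semimartingale-convergence part of Lemma \ref{t-sw} delivers $\lim_{t\ra\infty}X_t=0$ almost surely, completing the proof. The conceptual content worth emphasizing is exactly the heuristic stated in the introduction: a single exponentially stable state suffices to pull the whole switched system down to an exponential rate, irrespective of the jumping rates $(q_{ij})$, because the nonnegativity of the $\psi_i$ alone makes $-(D+Q)$ a nonsingular M-matrix.
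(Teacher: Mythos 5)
Your proposal is correct and follows the same overall route as the paper: both reduce Theorem \ref{cor-1} to Lemma \ref{prop-1} by weakening condition (iii) to the crude bound $L_t^{(i)}\rho\leq 0$ for $i\neq i_0$ and then showing that $-(D+Q)$ is a nonsingular M-matrix, where your $D$ is exactly the paper's matrix $\Theta_{i_0}$. The only genuine divergence is in how that M-matrix property is verified. The paper observes that $\Theta_{i_0}+Q$ generates a Markov chain with killing, invokes \cite[Theorem 2.27]{Dell} to conclude that $-(\Theta_{i_0}+Q)$ is a nonsingular M-matrix as soon as it is nonsingular, and obtains nonsingularity from the L\'evy--Desplanques theorem because the matrix is irreducibly diagonally dominant (strict dominance in row $i_0$, weak dominance elsewhere). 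You instead run a Perron--Frobenius argument: shift by $\alpha I$, use irreducibility and strict entrywise comparison with $Q$ (whose spectral abscissa is $0$ since $Q\mathbf{1}=0$) to force the spectral abscissa $\mu$ of $D+Q$ below $0$, and then exhibit a vector $\bm{\xi}\gg 0$ with $-(D+Q)\bm{\xi}=-\mu\,\bm{\xi}\gg 0$, which together with the Z-matrix structure is one of the standard equivalent characterizations in \cite{BP}. Both verifications are sound; yours is more self-contained and has the mild advantage of directly producing the positive vector $\bm{\xi}$ that the proof of Lemma \ref{prop-1} needs, while the paper's is shorter given the cited references. Your handling of the almost sure stability (feeding $V(t,x,i)=\rho(t,x)\xi_i$ and the constant function $\psi\equiv\gamma$ back into Lemma \ref{t-sw}) is exactly what the paper's one-line appeal to Lemma \ref{t-sw} amounts to.
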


\begin{proof} Condition (iii) yields that
\[L_t^{(i)}\rho(t,x)\leq 0,\quad \forall\, (t,x)\in \R^+\!\times\R^d,\ i\in \S\backslash\{i_0\}.\]
Set $\Theta_{i_0}=(\theta_{kl})$ is an $N\times N$ matrix with $\theta_{i_0i_0}=-c_{i_0}$ and $\theta_{kl}=0$ for $(k,l)\neq (i_0,i_0)$. It is known that $\big(\Theta_{i_0}+Q\big)$ associates with a continuous time Markov chain with killing. Therefore, by virtue of \cite[Theorem 2.27]{Dell}, $-\big(\Theta_{i_0}+Q\big)$ is a nonsingular M-matrix if $\big(\Theta_{i_0}+Q\big)$ is nonsingular. Here, one should note that the definition of M-matrix in
\cite[Definition 2.1]{Dell} coincides with the definition of nonsingular M-matrix in this work due to the assertion \cite[$\textrm{N}_{38}$, pp.137]{BP}. As $-\big(\Theta_{i_0}+Q\big)$ is an irreducibly diagonally dominant matrix, it follows from the L\'evy-Desplanques theorem that $-\big(\Theta_{i_0}+Q\big)$ is nonsingular. Therefore, according to Lemma \ref{prop-1}, there exists a constant $\gamma>0$ such that
\[\limsup_{t\ra\infty}\frac{\log\E |X_t|^p}{t}\leq -\gamma<0,\]
which means that $(X_t)$ is $p$-th moment exponentially stable. The almost surely asymptotical stability of $(X_t)$ follows immediately from Lemma \ref{t-sw}.
\end{proof}

%
%

\begin{rem}
  The condition (ii) of Theorem \ref{cor-1} means that the diffusion corresponding to $(X_t)$ in the fixed state $i_0$ is $p$-th moment exponentially stable. The condition (iii) means that the $p$-th moment of the diffusions corresponding to $(X_t)$ in other fixed states are at least $p$-th moment stable with algebraic decay if $\lim_{t\ra \infty}\int_0^t\psi_i(s)\d s=\infty$ for $i\in\S\backslash\{i_0\}$. According to Theorem \ref{cor-1},  $(X_t)$  is $p$-th moment exponentially stable regardless of the $Q$-matrix of $(\La_t)$ when $Q$ is irreducible. For Example \ref{ex-1} in the introduction, the diffusion process corresponding to $(X_t)$ in the fixed state ``1" is mean square exponentially stable and is stable with polynomial decay in the fixed state ``2". Hence, due to Theorem \ref{cor-1}, we can also prove that $(X_t)$ is mean square exponentially stable.
\end{rem}

In the end, we apply our results on the stability of regime-switching diffusion processes to the stabilization problem by a feedback control. Consider the following SDE:
\begin{equation}\label{s-1}\d X_t=b(t,X_t)\d t+\sigma(t,X_t)\d B_t.
\end{equation}
If this given SDE is not stable, it is traditional to design a feedback control $u(t,X_t)$ in order for the controlled system
\begin{equation}\label{s-2}\d X_t=(b(t,X_t)+u(t,X_t))\d t +\sigma(t,X_t)\d B_t\end{equation}
to become stable. Such a regular feedback control requires the continuous observations of the state $X_t$ for all $t\geq 0$. This is of course expensive. To be costless and more realistic, X. Mao and his collaborators in \cite{Mao13, Mao15} investigated the stabilization of a given hybrid SDE by discrete-time feedback control. Based on their method, one can design a feedback control $u(t,X([t/\tau]\tau))$ based on the discrete-time observations of $X_t$ at times $\tau,\,2\tau,\ldots$ so that the controlled system
\begin{equation}\label{s-3}\d X_t=(b(t,X_t)+u(t, X([t/\tau]\tau)))\d t+\sigma(t,X_t)\d B_t\end{equation}
becomes stable. The results in  \cite{Mao13, Mao15} require the step size $\tau$ to be small enough to guarantee the discrete-time controlled system being stable via studying  the stability of the  corresponding continuous time controlled system. Therefore, this method eliminates the observation times of $X_t$, but one still needs a large number of observations as $\tau $ is small.

Applying our results on the stability of regime-switching diffusion processes, we can design a new approach to stabilize the SDE \eqref{s-1}. We first realize a continuous time Markov chain $(\La_t)$ on the state space $\S=\{1,2\}$ independent of the Brownian motion in SDE \eqref{s-1}, then construct a feedback control $u(t,X_t,\La_t)$ such that $u(t,x,1)\equiv 0$ and the controlled system
\begin{equation}\label{s-4}
\d X_t=(b(t,X_t)+u(t,X_t,\La_t))\d t+\sigma(t,X_t)\d B_t
\end{equation}
to be stable. For instance, $u(t,X_t,\La_t)$ is designed so that we can use the criteria established in  Theorems \ref{prop-1} and \ref{cor-1}
to check the stability of SDE \eqref{s-4}. If the Markov chain $(\La_t)$ starts at $\La_0=1$, then it will stay at $``1"$ for a random time $T_1$, which is exponentially distributed with rate parameter $q_1$, i.e. $\p(T_1>t)=e^{-q_1 t}$. After that the process $(\La_t)$ jumps to $``2"$ and sojourns at $``2"$ for a random time $T_2$, which is exponentially distributed with parameter $q_2$. Then $(\La_t)$ jumps back to $``1"$ and so on. Let $\alpha(t)$ denote the time spent by $(\La_t)$ at the state $``1"$ during the time interval $[0,t]$. It is known that (cf. for example, \cite{Tak}) \begin{equation}\label{soj}
\lim_{t\ra\infty} \frac{\E [\alpha(t)]}{t}=\frac{q_2}{q_1+q_2}.
\end{equation} For example, when $q_1=q_2$, the equality \eqref{soj} implies that the process $(\La_t)$ spends half of time to stay at the state $``1"$ in average for large $t$. Since we do not need to observe $X_t$ when the Markov chain $(\La_t)$ is at state $``1"$, this method can
greatly reduce the number of observations of $X_t$ in average. Moreover, one can combine this kind of feedback control with X. Mao's results in \cite{Mao13} to further reduce the number of observations. This is a kind of feedback control putting in a random environment.

\end{document}